\documentclass[12pt]{article}
\usepackage{epsfig}
\usepackage{amsfonts}
\usepackage{amssymb}
\usepackage{amsthm}
\usepackage{graphicx}  
\usepackage{url}  
\usepackage[bottom=2.5cm, left=3cm, right=3cm]{geometry}
\newtheorem{theorem}{Theorem}
\newtheorem{lemma}[theorem]{Lemma}

\newcommand{\f}[1]{e^{\frac{2\pi}{\sqrt{3}}\sqrt{#1/\log{#1}}}}

\begin{document}
\title{Counting graphs with different numbers of spanning trees through the counting of prime partitions}
\author{Jernej Azarija \thanks{Department of Mathematics, University of Ljubljana, Jadranska 21, 1000 Ljubljana, Slovenia, e-mail: \texttt{ jernej.azarija@gmail.com}} }
\date{}
\maketitle

\begin{abstract}
Let $A_n$ $(n \geq 1)$ be the set of all integers $x$ such that there exists a connected graph on $n$ vertices with precisely $x$ spanning trees. In this paper, we show that $\left|A_{n}\right|$ grows faster than $\sqrt{n}\f{n}.$ This settles a question of Sedl\'{a}\v{c}ek posed in \cite{SedlNumSp}. 
\end{abstract}

\section{Introduction}
J. Sedl\'{a}\v{c}ek is regarded as one of the pioneers of Czech graph theory. He devoted much of his work to the study of subjects related to the number of spanning trees $\tau(G)$ of a graph $G$. 
In \cite{SedlMinSp} he studied the function $\alpha(n)$ defined as the least number $k$ for which there exists a graph on $k$ vertices having precisely $n$ spanning trees. He showed that for every $n > 6$, it holds:
$$
\alpha(n) \leq \left\{
	\begin{array}{ll}
		\frac{n+6}{3} & \mbox{if } n \equiv 0 \pmod{3},\\
		\frac{n+4}{3} & \mbox{if } n \equiv 2 \pmod{3}.
	\end{array}
\right.
$$

Azarija and \v{S}krekovski \cite{AzRiste} later found out that if $n > 25$ then:

$$
\alpha(n) \leq \left\{
	\begin{array}{ll}
		\frac{n+4}{3} & \mbox{if } n \equiv 2 \pmod{3}, \\
		\frac{n+9}{4} & \mbox{otherwise}.
	\end{array}
\right.
$$

Sedl\'{a}\v{c}ek  continued to study quantities related to the function $\tau$. In \cite{SedlNumSp} and \cite{SedlRegR} he considered the set $B_n^{t}$ defined as the set of integers so that $x \in B_n^{t}$ whenever there is a $t$-regular graph on $n$ vertices with precisely $x$ spanning trees. He showed that for odd integers $t \geq 3$, $ \lim_{a \to\infty} \left |B_{2a}^t\right|= \infty$ and whenever $t \geq 4$ is an even integer $ \lim_{a \to\infty} \left |B_{a}^t\right| = \infty$. In \cite{SedlNumSp} he also studied a more general set - $A_n$ defined as a set of numbers such that $x \in A_n$ whenever there exists a connected graph on $n$ vertices having precisely $x$ spanning trees. One could think about $|A_n|$ as the maximal number of connected graphs on $n$ vertices with mutually different numbers of spanning trees. Using a simple construction, he has shown that:

$$
 \displaystyle \lim_{n\to\infty} \frac{\left|A_n\right|}{n} = \infty
$$
and remarked: {\em it is not clear how the fraction $\frac{\left|A_n\right|}{n^2}$ behaves when $n$ tends to infinity.}
In modern terminology, we could write his result as $\left|A_n\right| = \omega(n)$ since $f(n) = \omega(g(n))$ whenever $|f(n)| \geq c|g(n)|$ for every $c>0$ and $n>n_0$ for an appropriately chosen $n_0$. 

In this paper, we extend his work and show that $\left|A_{n}\right| = \omega(\sqrt{n}\f{n}).$ In order to prove the result we define the graph $C_{x_1,\ldots,x_k}$ as follows. Let $3 \leq x_1 \leq \cdots \leq x_k$ be integers. By $C_{x_1,\ldots,x_k}$ we denote the graph that is obtained after identifying a vertex from the disjoint cycles $C_{x_1}, \ldots, C_{x_k}.$ Since $C_{x_1},\ldots,C_{x_k}$ are the blocks of $C_{x_1,\ldots,x_k}$, it follows that $$\tau(C_{x_1,\ldots,x_k}) = \prod_{i=1}^{k}x_i \quad \hbox{ and } \quad |V(C_{x_1,\ldots,x_k})| = \sum_{i=1}^k x_i - k + 1 .$$
We also introduce some number theoretical concepts. We say that $\left<x_1,\ldots,x_k\right>$ is a {\em partition} of $n$ with integer {\em parts} $1 \leq x_1 \leq \cdots \leq x_k$ if $\sum_{i=1}^{k} x_i = n$. The study of partitions covers an extensive part of the research done in combinatorics and number theory. If we denote by $p(n)$ the number of partitions of $n$ then, the celebrated theorem of Hardy and Ramanujan \cite{HardyRamamujan} states that $$p(n) \sim \frac{1}{4n\sqrt{3}}e^{\pi\sqrt{\frac{2n}{3}}}$$ or, equivalently $$ \displaystyle \lim_{n \to \infty} \frac{p(n)}{ \frac{1}{4n\sqrt{3}}e^{\pi\sqrt{\frac{2n}{3}}}} = 1.$$ Since then, asymptotics for many types of partition functions have been studied \cite{Sedgewick}. For the interest of our paper is the function $p_p(n)$ which we define as the number of partitions of $n$ into prime parts. In \cite{RothSezekres} Roth and Szekeres presented a theorem which can be used to derive the following asymptotic relation for $p_p$ $$p_p(n) \sim \f{n}.$$ Somehow surprising is the fact that if we disallow a constant number of primes as parts, the same asymptotic relation still holds. More specifically, if $p_{op}(n)$ is the number of partitions into odd primes then $$p_{op}(n) \sim p_p(n).$$

\section{Main Result}

In this section we prove that $\left|A_{n}\right| = \omega(\sqrt{n}\f{n})$ by showing that $$\displaystyle \lim_{n \to \infty} \frac{|A_{n}|}{\sqrt{n}\f{n}} = \infty.$$ We do so by estabilishing a lower bound for the number of partitions whose parts are only odd primes and whose sum is less than or equal to a given number $n$. 

\begin{lemma} \label{L1}
Let $P_n$ be the set of all partitions $\left < x_1, \ldots, x_k \right >$ with $\sum_{i=1}^k x_i \leq n$ and all the numbers $x_1, \ldots,x_k$ being odd primes. Then, there exists a $n_0$ such that for all $n \geq n_0$ 
$$ |P_n| \geq \frac{1}{4} \sqrt{n\log{n}}\f{n}. $$
\end{lemma}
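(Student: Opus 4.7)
The plan is to interpret $|P_n|$ as a cumulative partition count and then to invoke the asymptotic $p_{op}(m)\sim \f{m}$ recalled in the introduction. Each element of $P_n$ is an odd-prime partition of some $m\in\{0,1,\ldots,n\}$, so
$$|P_n|\;=\;\sum_{m=0}^{n}p_{op}(m).$$
For any fixed $\varepsilon>0$, the stated asymptotic gives a threshold $M=M(\varepsilon)$ beyond which $p_{op}(m)\geq (1-\varepsilon)\f{m}$, and by enlarging $M$ I can also assume that $t\mapsto\f{t}$ is strictly increasing on $[M,\infty)$, since the exponent $\sqrt{t/\log t}$ has positive derivative once $t>e$. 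The remaining task is to bound $\sum_{m=M}^{n}\f{m}$ sharply from below.

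The heart of the argument is to show that this sum is of order $\frac{\sqrt{3}}{\pi}\sqrt{n\log n}\cdot\f{n}$. Since the summand is increasing, I would pass to an integral, $\sum_{m=M}^{n}\f{m}\geq\int_{M}^{n}\f{t}\,dt$, and then evaluate the latter by a single integration by parts. Setting $g(t)=\tfrac{2\pi}{\sqrt{3}}\sqrt{t/\log t}$, a direct computation gives $g'(t)\sim \pi/\sqrt{3t\log t}$, and hence
$$\int_{M}^{n}e^{g(t)}\,dt\;\sim\;\frac{e^{g(n)}}{g'(n)}\;\sim\;\frac{\sqrt{3}}{\pi}\sqrt{n\log n}\cdot\f{n},$$
where the secondary integral produced by integration by parts is of smaller order because $g''(t)/g'(t)^{2}\to 0$ as $t\to\infty$.

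Combining the two steps yields $|P_n|\geq (1-\varepsilon)(1-o(1))\cdot\frac{\sqrt{3}}{\pi}\sqrt{n\log n}\cdot\f{n}$. Since $\frac{\sqrt{3}}{\pi}\approx 0.551$ comfortably exceeds $\tfrac{1}{4}$, choosing $\varepsilon$ small and $n$ large enough gives the claimed inequality. The step I expect to be most delicate is the quantitative control of the secondary term from the integration by parts: one must verify that $g''(t)/g'(t)^{2}$, which combines several competing contributions coming from differentiating $\sqrt{t/\log t}$, really does vanish fast enough. Luckily, the generous margin between $\sqrt{3}/\pi$ and $1/4$ provides ample slack to absorb any such losses. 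An alternative, calculus-free route is to treat $\sum_{j\geq 0}\f{n-j}$ as a near-geometric series whose consecutive ratio tends to $\exp(-\pi/\sqrt{3n\log n})$ and truncate at $j\asymp\sqrt{n\log n}$; this yields the same leading constant $\sqrt{3}/\pi$.
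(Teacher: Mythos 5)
Your proposal is correct and follows essentially the same route as the paper: bound $p_{op}(m)$ below by a constant multiple of $\f{m}$ using the stated asymptotic, compare the resulting sum with $\int \f{t}\,dt$, and show that this integral is asymptotic to $\frac{\sqrt{3}}{\pi}\sqrt{n\log n}\,\f{n}$, whose leading constant exceeds the required $\tfrac14$. The only (cosmetic) difference is that you evaluate the integral by integration by parts with the error controlled via $g''/g'^2\to 0$, whereas the paper obtains the same asymptotic by a single application of L'Hospital's rule.
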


\begin{proof}
Let $n_1$ be such a positive integer so that for all $n \geq n_1$ $$p_{op}(n) \geq \frac{1}{2} \f{n}.$$ 
For $n \geq n_1$ we then have $$ |P_n| = \sum_{i=3}^n p_{op}(i) \geq \frac{1}{2}\sum_{i=n_1}^n  \f{i} \geq \frac{1}{2}\int_{n_1-1}^n \f{x} \,dx. $$

Showing that $ \int_{n_1-1}^n \f{x} \,dx \sim \frac{\sqrt{3}}{\pi} \sqrt{n\log{n}} \f{n}$ would imply the existence of a positive integer $n_2$ such that 
$$ \int_{n_1-1}^n \f{x} \,dx \geq \frac{1}{2}\sqrt{n\log{n}} \f{n} $$ for all $n \geq n_2.$ The statement of the theorem would then immediately follow for all $n \geq n_0$ where $n_0 = max\{n_1,n_2\}.$ 
To prove the last asymptotic identity we observe that it follows from 
L'Hospital's rule that $$ \lim_{n \to \infty } \frac{ \int_{n_1-1}^n \f{x} \,dx }{\frac{\sqrt{3}}{\pi}\sqrt{n\log{n}} \f{n}} = \lim_{n \to \infty} \frac {\f{n}}  {\frac{d}{d\,n}(\frac{\sqrt{3}}{\pi}\sqrt{n\log{n}}\f{n} )} = 1.$$ 
\end{proof}

Lemma \ref{L1} readily gives an asymptotic lower bound for $|A_n|.$ 

\begin{theorem} \label{T1}
$|A_n| = \omega(\sqrt{n} \f{n}).$
\end{theorem}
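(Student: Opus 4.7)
The plan is to produce an injection from the set $P_n$ of odd-prime partitions appearing in Lemma \ref{L1} into the set $A_n$. Since $|P_n|\geq \tfrac{1}{4}\sqrt{n\log n}\,\f{n}$ for large $n$, this will give
$$\frac{|A_n|}{\sqrt{n}\,\f{n}} \;\geq\; \frac{1}{4}\sqrt{\log n} \;\to\; \infty,$$
which is exactly what the theorem asserts.

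To build the injection I would take a partition $\langle x_1,\ldots,x_k\rangle\in P_n$, with $m=\sum_i x_i\leq n$, and first form the block graph $C_{x_1,\ldots,x_k}$ defined in the introduction. By the identity recorded there it has $m-k+1\leq n$ vertices and exactly $\prod_i x_i$ spanning trees. To reach exactly $n$ vertices I would attach a pendant path with $n-(m-k+1)\geq 0$ new vertices at an arbitrary vertex of $C_{x_1,\ldots,x_k}$ (adding nothing when the count is already $n$). Each newly added edge is a bridge and therefore lies in every spanning tree of the enlarged graph, so the spanning tree count is unchanged; the padded graph is connected, has exactly $n$ vertices, and still has $\prod_i x_i$ spanning trees.

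For injectivity I would appeal to unique factorization: because every part $x_i$ is prime, the multiset $\{x_1,\ldots,x_k\}$ is determined by the product $\prod_i x_i$, so distinct partitions in $P_n$ produce graphs with distinct spanning tree counts, and all of these counts belong to $A_n$. This yields $|A_n|\geq |P_n|$, and Lemma \ref{L1} closes the argument. There is no serious obstacle at the theorem level — the analytic work has already been done in Lemma \ref{L1}; the only point requiring care is verifying that pendant-path padding preserves the spanning tree count, which the bridge observation handles in one line.
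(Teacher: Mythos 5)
Your proposal is correct and follows essentially the same route as the paper: the paper likewise pads $C_{x_1,\ldots,x_k}$ to exactly $n$ vertices by identifying one of its vertices with an endpoint of a path $P_{n-s+k}$, uses unique factorization of $\prod_i x_i$ into the prime parts for injectivity, and concludes via $|A_n|\geq|P_n|$ and Lemma \ref{L1}. No substantive differences.
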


\begin{proof}
Let $P_n$ be defined in the same way as in the statement of Lemma \ref{L1}. To every partition $\left<x_1, \ldots,x_k\right> \in P_n$ with sum $s = \sum_{i=1}^k x_i$ we associate the graph obtained after identifying a vertex from $C_{x_1,\ldots,x_k}$ with a vertex from the disjoint path $P_{n-s+k}.$ Observe that the resulting graph has precisely $n$ vertices and $\prod_{i=1}^k x_i$ spanning trees. Since all the parts in the partitions are primes it follows that any pair of graphs that were obtained from two different partitions in $P_n$ have a different number of spanning trees. Thus $$ |A_n| \geq |P_n| $$ and therefore from Lemma \ref{L1} we know that for large enough $n$ $$ |A_n| \geq \frac{1}{4} \sqrt{n \log{n}} \f{n}. $$ Since $$ \lim_{n \to \infty} \frac{\frac{1}{4} \sqrt{n \log{n}} \f{n}} {\sqrt{n}\f{n}} = \infty$$ it follows from the squeeze theorem that $$ \lim_{n \to \infty}\frac{|A_n|}{\sqrt{n} \f{n}} = \infty $$ from where the stated claim follows.
\end{proof}

Observe, that all the graphs constructed in the proof of Theorem \ref{T1} contain a cut vertex. Since almost all graphs are $2$-connected \cite{Harary} it is reasonable to expect that there exists a construction of a class $C_n$ of $2$-connected graphs of order $n$ with mutually different number of spanning trees such that $|C_n| = \omega(|P_n|) .$ We thus leave it as a further investigation to try to improve the bound on the growth rate of $|A_n|.$

\section{Acknowledgements}

The author is thankful to Riste \v{S}krekovski and Martin Rai\v{c} for fruitful discussions.

\end{document}